\providecommand{\U}[1]{\protect\rule{.1in}{.1in}}
\newtheorem{theorem}{Theorem}
\newenvironment{proof}[1][Proof]{\noindent\textbf{#1.} }{\ \rule{0.5em}{0.5em}}
\begin{document}

\title{Walker's cancellation theorem}
\author{Robert Lubarsky and Fred Richman\\Florida Atlantic University}
\date{22 August 2012}
\maketitle

\begin{abstract}
Walker's cancellation theorem says that if $B\oplus\mathbf{Z}$ is isomorphic
to $C\oplus\mathbf{Z}$ in the category of abelian groups, then $B $ is
isomorphic to $C$. We construct an example in a diagram category of abelian
groups where the theorem fails. As a consequence, the original theorem does
not have a constructive proof even if $B$ and $C$ are subgroups of the free
abelian group on two generators. Both of these results contrast with a group
whose endomorphism ring has stable range one, which allows a constructive
proof of cancellation and also a proof in any diagram category.

\end{abstract}

\section{Cancellation}

An object $G$ in an additive category is \textbf{cancellable} if whenever
$B\oplus G$ is isomorphic to $C\oplus G$, then $B$ is isomorphic to $C$.
Elbert Walker, in his dissertation \cite{W}, and P. M. Cohn in \cite{C},
independently answered a question of Irving Kaplansky by showing that finitely
generated abelian groups are cancellable in the category of abelian groups.
The most interesting case is that of $\mathbf{Z}$, the additive group of
integers. That's because finitely generated groups are direct sums of copies
of $\mathbf{Z}$ and of cyclic groups of prime power order, and a cyclic group
of prime power order has a local endomorphism ring, hence is cancellable by a
theorem of Azumaya \cite{Az}.

It is somewhat anomalous that $\mathbf{Z}$ is cancellable. A rank-one
torsion-free group $A$ is cancellable if and only if $A\cong\mathbf{Z}$ or the
endomorphism ring of $A$ has stable range one \cite[Theorem 8.12]{A}%
,\cite{FL}. (A ring $R$ has\textbf{\ stable range one} if whenever $aR+bR=R$,
then $a+bR$ contains a unit of $R$.) Thus for rank-one torsion-free groups,
the endomorphism ring tells the whole story---except for $\mathbf{Z}$. It
turns out that an object is cancellable if its endomorphism ring has stable
range one. The proof of this in \cite[Theorem 4.4]{L} is constructive and
works for any abelian category. It is also true, \cite{L}, that semilocal
rings have stable range one, so Azumaya's theorem is a special case of this.
In fact, that the endomorphism ring of $A$ has stable range one is equivalent
to $A$ being substitutable, a stronger condition than cancellation
\cite[Theorem 4.4]{L}. We say that $A$ is \textbf{substitutable} if any two
summands a group, with complements that are isomorphic to $A$, have a common
complement. The group $\mathbf{Z}$ is not substitutable: Consider the
subgroups of $\mathbf{Z}^{2}$ generated by $\left(  1,0\right)  $, $\left(
0,1\right)  $, $\left(  7,3\right)  $, and $\left(  5,2\right)  $. The first
and second, and the third and fourth, are complementary summands. The second
and fourth do not have a common complement because that would require $\left(
a,b\right)  $ with $a=\pm1$ and $2a-5b=\pm1$.

In this paper we will investigate whether $\mathbf{Z}$ is cancellable in the
(abelian) category $\mathcal{D}_{T}\left(  \mathbf{Ab}\right)  $ of diagrams
of abelian groups based on a fixed finite poset $T$ with a least element.
There is a natural embedding of $\mathbf{Ab}$ into $\mathcal{D}_{T}\left(
\mathbf{Ab}\right)  $ given by taking a group into the constant diagram on $T$
with identity maps between the groups on the nodes. In particular, we can
identify the group of integers as an object of $\mathcal{D}_{T}\left(
\mathbf{Ab}\right)  $. As the endomorphism ring of any group $G$ is the same
as that of its avatar in $\mathcal{D}_{T}\left(  \mathbf{Ab}\right)  $, a
substitutable group is substitutable viewed as an object in $\mathcal{D}%
_{T}\left(  \mathbf{Ab}\right)  $. However it turns out that $\mathbf{Z}$ is
not cancellable in $\mathcal{D}_{T}\left(  \mathbf{Ab}\right)  $ where $T$ is
the linearly ordered set $\left\{  0,1,2\right\}  $.

This result has repercussions for the constructive theory of abelian groups.
Because of it, we can conclude that Walker's theorem does not admit a
constructive proof. In fact, it is not even provable when $B$ and $C$ are
restricted to be subgroups of $\mathbf{Z}^{2}$. It was the question of whether
Walker's theorem had a constructive proof that initiated our investigation.
You can think of a constructive proof as being a proof within the context of
intuitionistic logic. Such proofs are normally constructive in the usual
informal sense. Most any proof of Azumaya's theorem is constructive, so a
constructive proof of the cancellability of $\mathbf{Z}$ would show that you
can cancel finite direct sums of finite and infinite cyclic groups.

As any homomorphism from an abelian group onto $\mathbf{Z}$ splits, Walker's
theorem can be phrased as follows: If $A$ is an abelian group, and
$f,g:A\rightarrow\mathbf{Z}$ are epimorphisms, then $\ker f\cong\ker g$. The
following theorem gets us part way to a proof of Walker's theorem.

\begin{theorem}
\label{proof}Let $A$ be an abelian group and $f,g:A\rightarrow\mathbf{Z}$ be
epimorphisms. Then $f\left(  \ker g\right)  =g\left(  \ker f\right)  $ so that%
\[
\frac{\ker g}{\ker f\cap\ker g}\cong f\left(  \ker g\right)  =g\left(  \ker
f\right)  \cong\frac{\ker f}{\ker f\cap\ker g}
\]

\end{theorem}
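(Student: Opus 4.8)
The displayed pair of isomorphisms is a formality: restrict $f$ to $\ker g$. This gives a homomorphism $\ker g\to\mathbf{Z}$ whose image is $f(\ker g)$ and whose kernel is $\{x\in\ker g:f(x)=0\}=\ker f\cap\ker g$, so the first isomorphism theorem yields $\ker g/(\ker f\cap\ker g)\cong f(\ker g)$; the symmetric argument with $g$ restricted to $\ker f$ gives $\ker f/(\ker f\cap\ker g)\cong g(\ker f)$. Thus the entire content of the theorem is the equality $f(\ker g)=g(\ker f)$ of subgroups of $\mathbf{Z}$, and that is what I would concentrate on.

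One way to see what is going on is to pass to the subgroup $H=\{(f(a),g(a)):a\in A\}\subseteq\mathbf{Z}^{2}$. Then $f(\ker g)=\{s:(s,0)\in H\}$ and $g(\ker f)=\{t:(0,t)\in H\}$, so the assertion is that the two coordinate-axis slices of $H$ coincide; the hypothesis that $f$ and $g$ are onto says precisely that $H$ projects onto each coordinate axis, and that is what forces the two slices to agree. For the write-up, though, it is cleaner to argue directly in $A$ rather than to develop this picture.

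So the plan is: since $g$ is onto, fix $b\in A$ with $g(b)=1$, and consider the endomorphism $\phi$ of $A$ defined by $\phi(x)=f(x)\,b-f(b)\,x$. A one-line computation shows $f(\phi(x))=0$ for every $x\in A$, so $\phi$ maps $A$ into $\ker f$; and if $x\in\ker g$ then $g(\phi(x))=f(x)g(b)-f(b)g(x)=f(x)$. Hence every element of $f(\ker g)$ is realized as $g(\phi(x))$ with $\phi(x)\in\ker f$, which gives $f(\ker g)\subseteq g(\ker f)$. The reverse inclusion is the mirror image: fix $a\in A$ with $f(a)=1$ and use $\psi(y)=g(y)\,a-g(a)\,y$, which carries $A$ into $\ker g$ and satisfies $f(\psi(y))=g(y)$ on $\ker f$. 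Combining the two inclusions gives $f(\ker g)=g(\ker f)$, and then the displayed isomorphisms follow as in the first paragraph.

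I do not expect a serious obstacle. The only real decision is the choice of the auxiliary maps $\phi$ and $\psi$: trying instead to compare $\ker f$ and $\ker g$ by tracking generators tends to stall, whereas the observation that $x\mapsto f(x)b-f(b)x$ annihilates $f$ identically makes everything fall out immediately. One small point worth noting, in view of the constructive reading stressed in the introduction, is that \textquotedblleft epimorphism\textquotedblright\ here means surjective homomorphism, so that the witnesses $a$ and $b$ really are at hand.
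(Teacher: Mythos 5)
Your proposal is correct and is essentially the paper's argument: the paper works in the image $I=\{(f(a),g(a)):a\in A\}\subseteq\mathbf{Z}^{2}$ (your $H$) and shows $(x,0)\in I$ implies $(0,x)=x(n,1)-n(x,0)\in I$ using some $(n,1)\in I$, which is exactly your computation $\phi(x)=f(x)b-f(b)x$ pulled back to $A$ with $n=f(b)$. The only difference is whether the linear combination is performed in $A$ or in its image in $\mathbf{Z}^{2}$, so there is nothing substantive to distinguish the two write-ups.
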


\begin{proof}
Consider the image $I$ of the map $A\rightarrow\mathbf{Z}\oplus\mathbf{Z}$
induced by $f$ and $g$. As $f$ and $g$ are epimorphisms, $I$ is a subdirect
product. Note that $f\left(  \ker g\right)  =I\cap\left(  \mathbf{Z}%
\oplus0\right)  $ when the latter is viewed as a subgroup of $\mathbf{Z}$, and
similarly $g\left(  \ker f\right)  =I\cap\left(  0\oplus\mathbf{Z}\right)  $.
To finish the proof we show that if $\left(  x,0\right)  \in I$, then $\left(
0,x\right)  \in I$. As $I$ is a subdirect product, there exists $n\in
\mathbf{Z}$ such that $\left(  n,1\right)  \in I$. Thus $\left(  0,x\right)
=x\left(  n,1\right)  -n\left(  x,0\right)  \in I$.\medskip
\end{proof}

Thus we get the desired isomorphism $\ker f\cong\ker g$ if $\ker f\cap\ker
g=0$ or if $f\left(  \ker g\right)  $ is projective. Classically, every
subgroup of $\mathbf{Z}$ is projective, so this constitutes a classical proof.
Indeed, it is a classical proof that in the category of modules over a
Dedekind domain $D$, the module $D$ is cancellable \cite{H}.

\section{The example}

Our example lives in the category $\mathcal{D}_{T}\left(  \mathbf{Ab}\right)
$ of diagrams of abelian groups based on the linearly ordered set $T=\left\{
0,1,2\right\}  $. The example shows that you can't cancel $\mathbf{Z}$ in
$\mathcal{D}_{T}\left(  \mathbf{Ab}\right)  $.

The groups on the nodes will be subgroups $A_{0}\subset A_{1}\subset
A_{2}=\mathbf{Z}^{3}$ defined by generators:%
\[
A_{0}=%
\begin{array}
[c]{c}%
\left(  1,3,0\right) \\
\left(  3,1,0\right)
\end{array}
\ A_{1}=%
\begin{array}
[c]{c}%
\left(  1,0,-24\right) \\
\left(  0,1,8\right) \\
\left(  0,0,64\right)
\end{array}
\ A_{2}=%
\begin{array}
[c]{c}%
\left(  1,0,0\right) \\
\left(  0,1,0\right) \\
\left(  0,0,1\right)
\end{array}
\]
Note that $\left(  0,8,0\right)  ,\left(  8,0,0\right)  \in A_{0}$. The maps
between these groups are inclusions. Define the maps $f,g:\mathbf{Z}%
^{3}\rightarrow\mathbf{Z}$ by $f\left(  a,b,c\right)  =a$ and $g\left(
a,b,c\right)  =b$. The maps $f$ and $g$ each induce maps from these three
groups into $\mathbf{Z}$ which give two maps from the diagram into the
constant diagram $\mathbf{Z}$. We denote the kernel of the map $f$ restricted
to $A_{i}$ by $\ker_{i}f$ and similarly for $g$. These kernels admit the
following generators:%
\[
\ker_{0}f=\left(  0,8,0\right)  \ \ \ker_{1}f=%
\begin{array}
[c]{c}%
\left(  0,1,8\right) \\
\left(  0,0,64\right)
\end{array}
\ \ \ker_{2}f=%
\begin{array}
[c]{c}%
\left(  0,1,0\right) \\
\left(  0,0,1\right)
\end{array}
\]%
\[
\ker_{0}g=\left(  8,0,0\right)  \ \ \ker_{1}g=%
\begin{array}
[c]{c}%
\left(  1,0,-24\right) \\
\left(  0,0,64\right)
\end{array}
\ \ \ker_{2}g=%
\begin{array}
[c]{c}%
\left(  1,0,0\right) \\
\left(  0,0,1\right)
\end{array}
\]

The diagrams $B=\ker f$ and $C=\ker g$ are clearly each embeddable in the
diagram $\mathbf{Z}\oplus\mathbf{Z}$. That $B\oplus\mathbf{Z}$ is isomorphic
to $C\oplus\mathbf{Z}$ follows from the fact that the diagram $A$ can be
written as an internal direct sum $B\oplus\mathbf{Z}$ and also as an internal
direct sum $C\oplus\mathbf{Z}$. The generator of $\mathbf{Z}$ in the first
case is the element $\left(  1,3,0\right)  $, in the second case $\left(
3,1,0\right)  $.

\begin{theorem}
\label{Theorem no iso}There is no isomorphism between $\ker f$ and $\ker g$ in
$\mathcal{D}_{T}\left(  \mathbf{Ab}\right)  $.
\end{theorem}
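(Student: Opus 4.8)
The plan is to show that an isomorphism $\varphi\colon\ker f \to \ker g$ in $\mathcal{D}_T(\mathbf{Ab})$ cannot exist by tracking what it must do on the three nodes simultaneously. Such a $\varphi$ consists of group isomorphisms $\varphi_i\colon\ker_i f \to \ker_i g$ for $i=0,1,2$ that commute with the inclusion maps, i.e.\ $\varphi_2$ restricts to $\varphi_1$ on $\ker_1 f$ and $\varphi_1$ restricts to $\varphi_0$ on $\ker_0 f$. So really the whole isomorphism is determined by a single isomorphism $\varphi_2\colon\ker_2 f \to \ker_2 g$ of rank-two free abelian groups that happens to carry the subgroup $\ker_1 f$ onto $\ker_1 g$ and the subgroup $\ker_0 f$ onto $\ker_0 g$. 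First I would set up coordinates: on $\ker_2 f = \{(0,b,c)\} \cong \mathbf{Z}^2$ use the basis $(0,1,0),(0,0,1)$, and on $\ker_2 g = \{(a,0,c)\} \cong \mathbf{Z}^2$ use the basis $(1,0,0),(0,0,1)$. Then $\varphi_2$ is given by an invertible integer matrix $M \in GL_2(\mathbf{Z})$.

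Next I would translate the two subgroup constraints into arithmetic conditions on $M$. In the chosen coordinates, $\ker_1 f$ is the subgroup of $\mathbf{Z}^2$ generated by $(1,8)$ and $(0,64)$, which is $\{(s,t) : t \equiv 8s \pmod{64}\}$; likewise $\ker_1 g$ is generated by $(1,-24)$ and $(0,64)$, i.e.\ $\{(s,t): t\equiv -24s \pmod{64}\}$. The quotients $\ker_2 f/\ker_1 f$ and $\ker_2 g/\ker_1 g$ are both cyclic of order $64$. So the requirement $\varphi_2(\ker_1 f) = \ker_1 g$ forces $M$ to induce an isomorphism on these $\mathbf{Z}/64$ quotients, which pins down $M \bmod 64$ up to the ambiguity of multiplying the relevant coordinate by a unit mod $64$. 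Separately, $\ker_0 f$ is generated by $(0,8,0)$, which in coordinates is $(8,0)$, and $\ker_0 g$ is generated by $(8,0,0)$, i.e.\ $(8,0)$ as well. The condition $\varphi_2(\ker_0 f) = \ker_0 g$ says $M$ sends $(8,0)$ to $\pm(8,0)$, hence $M$ sends $(1,0)$ to $(\pm 1, 0)$ — so the first column of $M$ is $(\pm1, 0)^T$, and by invertibility the second row has the form $(0, \pm1)$.

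Combining these: $M = \begin{pmatrix} \pm 1 & k \\ 0 & \pm 1\end{pmatrix}$ for some integer $k$, and then I impose the mod-$64$ condition coming from the $\ker_1$ constraint and derive a contradiction — the congruence that $M$ would have to satisfy mod $64$ (relating the "$8$" from $\ker_1 f$ to the "$-24$" from $\ker_1 g$, equivalently something like $8 \equiv \pm(-24 + 64 m)$ or a divisibility like $32 \mid 16$ after chasing the units) has no solution. I expect the main obstacle, and the place requiring genuine care, to be exactly this final number-theoretic bookkeeping: correctly computing how $M$ acts on the order-$64$ cyclic quotients, keeping track of all four sign choices and the unit ambiguity mod $64$, and confirming that every surviving case collides with the $\ker_1$ congruence. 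A clean way to organize it is to observe that any valid $M$ must simultaneously satisfy the $\ker_0$ condition (forcing upper-triangular unipotent-up-to-signs form) and map the element $(1,8) \in \ker_1 f$ into $\ker_1 g$, i.e.\ $(\pm 1, k \pm 8)$ must satisfy $k \pm 8 \equiv \mp 24 \pmod{64}$; checking this is impossible for the allowed forms of $k$ (which are themselves constrained by requiring $M(\ker_1 f) \supseteq \ker_1 g$, not just $\subseteq$) finishes the proof. I would also double-check the degenerate possibility that $\varphi$ need not come from a global $\varphi_2$ by noting the diagram has $A_2 = \mathbf{Z}^3$ at the top with injective transition maps, so $\varphi_0,\varphi_1$ are forced to be restrictions of $\varphi_2$, legitimizing the reduction.
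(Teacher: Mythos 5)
Your strategy is the paper's own, just phrased in matrix language: reduce everything to a single $M\in GL_{2}(\mathbf{Z})$ describing $\varphi_{2}$ (the paper's $e$, $e'$, $x$ are precisely the entries of your triangular matrix), use the node-$0$ condition to force $M=\left(\begin{smallmatrix}\epsilon & k\\ 0 & \epsilon'\end{smallmatrix}\right)$ with $\epsilon,\epsilon'=\pm1$, and then extract a contradiction from the node-$1$ condition modulo $64$. The reduction itself is sound, including your closing remark that injectivity of the transition maps forces $\varphi_{0}$ and $\varphi_{1}$ to be restrictions of $\varphi_{2}$.

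However, the one step that carries all the content---the mod-$64$ computation---is miscomputed as written, and in your version it does not yield a contradiction. Under the column-vector convention (the one for which your node-$0$ conclusion ``the first column of $M$ is $(\pm1,0)^{T}$'' is correct), the image of $(1,8)$ is $(\epsilon+8k,\,8\epsilon')$, not $(\pm1,\,k\pm8)$: you have applied $M$ on the wrong side. Your congruence $k\pm8\equiv\mp24\pmod{64}$ is solvable (e.g.\ $k\equiv32\pmod{64}$), which is why you found yourself reaching for an extra constraint on $k$ from the reverse inclusion $M(\ker_{1}f)\supseteq\ker_{1}g$; that rescue is a symptom of the sign/side error, not a necessary ingredient. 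Done consistently, membership of $(\epsilon+8k,\,8\epsilon')$ in $\ker_{1}g=\{(s,t):t\equiv-24s\pmod{64}\}$ reads $8\epsilon'+24(\epsilon+8k)\equiv0\pmod{64}$; since $24\cdot8k=192k\equiv0\pmod{64}$, the unknown $k$ drops out entirely and the condition becomes $\epsilon'+3\epsilon\equiv0\pmod{8}$, which fails for all four choices of signs. So the contradiction already follows from the containment $\varphi_{2}(\ker_{1}f)\subseteq\ker_{1}g$ applied to the single element $(1,8)$, with no case analysis on $k$---which is exactly the computation in the paper's proof. Fix that one line and your argument is complete and identical to the paper's.
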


\begin{proof}
Suppose we had an isomorphism $\varphi:\ker f\rightarrow\ker g$. Looking at
the isomorphisms at $0$ and $2$, there exist $e,e^{\prime}=\pm1$ and
$x\in\mathbf{Z}$ so that
\[
\varphi\left(  0,8,0\right)  =\left(  8e,0,0\right)  \text{ and }%
\varphi\left(  0,0,1\right)  =\left(  x,0,e^{\prime}\right)
\]
Thus $\varphi\left(  0,1,8\right)  =\left(  e+8x,0,8e^{\prime}\right)  $. For
$\left(  e+8x,0,8e^{\prime}\right)  $ to be in $\ker_{1}g$, we must have
$8e^{\prime}+24\left(  e+8x\right)  $ divisible by $64$. But $8e^{\prime
}+24\left(  e+8x\right)  $ is equal to $8e^{\prime}+24e$ modulo $64$, and this
is not divisible by $64$.\medskip
\end{proof}

The following result shows that we can't get an example that is a subobject of
the diagram $\mathbf{Z}^{n}$ using the linearly ordered set $T=\left\{
0,1\right\}  $.

\begin{theorem}
Let $T=\left\{  0,1\right\}  $. In the category $\mathcal{D}_{T}\left(
\mathbf{Ab}\right)  $, if $A$ and $B$ are subobjects of $\mathbf{Z}^{n}$, and
$A\oplus\mathbf{Z}$ is isomorphic to $B\oplus\mathbf{Z}$, then $A$ is
isomorphic to $B$.
\end{theorem}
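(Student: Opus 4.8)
The plan is to leverage Theorem~\ref{proof} together with the special structure forced by working over the two-element poset $T=\{0,1\}$. A subobject $A$ of $\mathbf{Z}^n$ in $\mathcal{D}_T(\mathbf{Ab})$ is simply a pair of subgroups $A_0\subset A_1\subset\mathbf{Z}^n$ with the inclusion as the structure map. Since any subgroup of $\mathbf{Z}^n$ is free of finite rank, both $A_0$ and $A_1$ are free. Given an isomorphism $A\oplus\mathbf{Z}\cong B\oplus\mathbf{Z}$ in $\mathcal{D}_T(\mathbf{Ab})$, I want to realize this, as in the paper's reformulation of Walker's theorem, via a single ambient diagram $D$ (the internal direct sum) equipped with two epimorphisms $f,g:D\to\mathbf{Z}$ (the constant diagram) whose kernels are $A$ and $B$; here $D_0\subset D_1$ are free, $D_1$ has rank $n+1$, and $f,g$ are given by compatible surjections $D_i\twoheadrightarrow\mathbf{Z}$.

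**Applying Theorem~\ref{proof} at each node.**

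First I would apply Theorem~\ref{proof} separately at node $0$ and node $1$. At node $i$ we get $f(\ker_i g)=g(\ker_i f)$ as subgroups of $\mathbf{Z}$, hence a common subgroup $m_i\mathbf{Z}$, and an isomorphism $\ker_i f/(\ker_i f\cap\ker_i g)\cong\mathbf{Z}/m_i\mathbf{Z}\cong\ker_i g/(\ker_i f\cap\ker_i g)$. The crucial point that fails over the three-element poset but holds here is that there are only \emph{two} nodes, so the compatibility conditions one must satisfy form a much simpler system. Concretely, set $N=\ker_1 f\cap\ker_1 g$ at the top; then $\ker_1 f$ and $\ker_1 g$ are two subgroups of $D_1$ each containing $N$ with cyclic quotient, and I would show $\ker_1 f\cong\ker_1 g$ canonically by choosing splittings. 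The subtlety is that an isomorphism of diagrams must also respect the map into node $0$, i.e.\ carry $\ker_0 f$ to $\ker_0 g$ inside the ambient groups.

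**The key step: constructing the diagram isomorphism.**

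The heart of the argument is to produce $\varphi_1:\ker_1 f\to\ker_1 g$ and $\varphi_0:\ker_0 f\to\ker_0 g$ with $\varphi_1|_{\ker_0 f}=\varphi_0$. I expect the main obstacle to be exactly this compatibility, since that is precisely what the counterexample in Theorem~\ref{Theorem no iso} exploits: there the divisibility obstruction ($8e'+24e\not\equiv 0\bmod 64$) arises from the interaction of \emph{three} nested groups. With only the chain $\ker_0\subset\ker_1$, I would argue as follows. Consider the exact structure: $\ker_1 f=N\oplus\mathbf{Z}u$ and $\ker_1 g=N\oplus\mathbf{Z}v$ for suitable $u,v$, where $f(u)$ and $g(v)$ both generate $m_1\mathbf{Z}$ (after adjusting signs we may take $f(u)=g(v)=m_1$). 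The subgroup $\ker_0 f\subset\ker_1 f$ and $\ker_0 g\subset\ker_1 g$ then each project onto some subgroup of $\mathbf{Z}u$, resp.\ $\mathbf{Z}v$; I would show these projections have the same index by a second application of Theorem~\ref{proof} at node $0$ relative to the restricted epimorphisms, and then build $\varphi$ by matching the $N$-parts (using that $N$ is free, and splitting the relevant short exact sequences of free groups — which always split) and matching $u\mapsto v$ on the complement. Because both $A_0\cap A_1$-type intersections live inside free groups and all quotients in sight are quotients of free abelian groups that one can split, no divisibility obstruction survives; the two-node case collapses to a single pair of Walker-type data where the argument after Theorem~\ref{proof} (``if $\ker f\cap\ker g=0$ or $f(\ker g)$ is projective'') can be applied at each level and then glued.

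**Finishing.**

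Finally I would check that the pair $(\varphi_0,\varphi_1)$ is a morphism in $\mathcal{D}_T(\mathbf{Ab})$ — automatic once $\varphi_1$ restricts to $\varphi_0$ — and that it is an isomorphism, which follows since each $\varphi_i$ is bijective by construction. Hence $A=\ker f\cong\ker g=B$ in $\mathcal{D}_T(\mathbf{Ab})$, as required. I would keep the write-up constructive, since all the splittings used are splittings of surjections onto free abelian groups, which are available constructively, and Theorem~\ref{proof} itself has a constructive proof.
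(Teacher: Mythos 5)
Your overall strategy --- reduce to two epimorphisms $f,g$ from a single ambient diagram $D\subseteq\mathbf{Z}^{n+1}$ onto the constant diagram $\mathbf{Z}$, apply Theorem \ref{proof} at each node, and then glue --- correctly identifies where the difficulty lies (making $\varphi_1$ compatible with node $0$) but does not resolve it. The map you propose, the identity on $N=\ker_1 f\cap\ker_1 g$ together with $u\mapsto v$, in general fails to carry $\ker_0 f$ onto $\ker_0 g$, and the condition you offer to verify (that $\ker_0 f$ and $\ker_0 g$ project onto subgroups of $\mathbf{Z}u$ and $\mathbf{Z}v$ of the same index) is not sufficient. Concretely, take $D_1=\mathbf{Z}^3$ with $f$ and $g$ the first and second coordinates and $D_0$ generated by $\left(1,0,0\right)$ and $\left(0,1,1\right)$. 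Then $N=\left\langle\left(0,0,1\right)\right\rangle$, $u=\left(0,1,0\right)$, $v=\left(1,0,0\right)$, $\ker_0 f=\left\langle\left(0,1,1\right)\right\rangle$, and $\ker_0 g=\left\langle\left(1,0,0\right)\right\rangle$; both node-$0$ kernels project onto $\mathbf{Z}u$, resp.\ $\mathbf{Z}v$, with index $1$, yet your $\varphi_1$ sends $\left(0,1,1\right)$ to $\left(1,0,1\right)\notin\ker_0 g$. An isomorphism of diagrams does exist here (each node-$0$ kernel is generated by a primitive vector of the corresponding rank-two node-$1$ kernel), but producing it requires choosing the splitting $\ker_1 f=N\oplus\mathbf{Z}u$ adapted to $\ker_0 f$, and you give no procedure for doing this in general; the assertion that ``no divisibility obstruction survives'' is precisely what has to be proved, not a consequence of the splittings you invoke.

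The paper's proof sidesteps the construction of any map: a subobject $A_0\subseteq A_1$ of $\mathbf{Z}^n$ is classified up to isomorphism, via the Smith normal form of an integer matrix whose rows generate $A_0$ with respect to a basis of $A_1$, by the rank of $A_1$ together with the isomorphism class of $A_1/A_0$. Both invariants behave trivially under adding a constant summand $\mathbf{Z}$ --- the rank increases by one and the quotient is unchanged --- so cancellation is immediate. If you want to salvage your route, the cleanest repair is to establish that classification first and deduce that your nodewise data can be normalized simultaneously; but at that point Theorem \ref{proof} is no longer doing any work, and you have essentially rederived the paper's argument.
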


\begin{proof}
Write $A\subseteq\mathbf{Z}^{n}$ as $A_{0}\subseteq A_{1}$. As $A_{1}$ is a
finite-rank free abelian group, the situation $A_{0}\subseteq A_{1}$ can be
represented by an integer matrix whose rows generate $A_{0}$. Using elementary
row and column operations, we can diagonalize this matrix so that each entry
on the diagonal divides the next (Smith normal form). Thus $A$ is isomorphic
to $B$ exactly when the ranks of the free abelian groups $A_{1}$ and $B_{1}$
are equal, and $A_{1}/A_{0}\cong B_{1}/B_{0}$. If $C=A\oplus\mathbf{Z}$ is
isomorphic to $D=B\oplus\mathbf{Z}$, then the rank of $C_{1}=A_{1}%
\oplus\mathbf{Z}$ is equal to the rank of $D_{1}=B_{1}\oplus\mathbf{Z}$, so
the rank of $A_{1}$ is equal to the rank of $B_{1}$, and $A_{1}/A_{0}\cong
C_{1}/C_{0}\cong D_{1}/D_{0}\cong B_{1}/B_{0}$, so $A$ is isomorphic to $B$.
\medskip
\end{proof}

This theorem leaves open the question of whether there is an counterexample of
this sort using the poset that looks like a \textquotedblleft
V\textquotedblright.

\section{The Brouwerian counterexample}

A Brouwerian example is an object depending on a finite family of
propositions. The idea is that if a certain statement holds about that object,
then some relation holds among the propositions. Thus a Brouwerian example is
piece of \textit{reverse mathematics}: the derivation of a propositional
formula from a mathematical statement. For example, there may be just one
proposition $P$ and if the statement holds for that object, then $P\vee\lnot
P$ holds. Thus from the general truth of the statement we could derive the law
of excluded middle, from which we would conclude that the statement does not
admit a constructive proof. Our Brouwerian counterexample to Walker's theorem
is based on the diagram of groups of the previous section.

Let $P$ and $Q$ be propositions. Let
\[
A=\left\{  x\in\mathbf{Z}^{3}:x\in A_{0}\text{ or }P\wedge x\in A_{1}\text{ or
}P\wedge Q\right\}
\]
where $A_{0}$ and $A_{1}$ are defined in the preceding section. The maps
$f,g:\mathbf{Z}^{3}\rightarrow\mathbf{Z}$ are defined as before by $f\left(
a,b,c\right)  =a$ and $g\left(  a,b,c\right)  =b$.

Note that $A$ is a discrete group (any two elements are either equal or
distinct) as it is a subgroup of the discrete group $\mathbf{Z}^{3}$.

\begin{theorem}
The groups $\ker f$ and $\ker g$ are isomorphic if and only if $P\vee
P\Rightarrow\left(  Q\vee\lnot Q\right)  $.
\end{theorem}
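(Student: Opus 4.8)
The plan is to analyze the group $A$ piecewise according to which of the disjuncts defining it actually hold, and to extract from a hypothetical isomorphism $\varphi\colon\ker f\to\ker g$ enough data to decide the relevant propositional formula. First I would observe that $A$ always contains $A_0$, so the nodes of the diagram $\ker f$ and $\ker g$ always contain $\ker_0 f$ and $\ker_0 g$ respectively; moreover $\ker f$ is a subobject of the constant diagram $\mathbf{Z}$, and at node $0$ both kernels are the rank-one group generated by $(0,8,0)$, respectively $(8,0,0)$. So the real content is at nodes $1$ and $2$, where the groups are $A$ itself (the maps in the diagram being inclusions, and $A$ sitting inside $\mathbf{Z}^3$ at both nodes $1$ and $2$). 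This makes $\ker f$ and $\ker g$ essentially the data $(\ker_0 f \hookrightarrow \ker_A f)$ and $(\ker_0 g \hookrightarrow \ker_A g)$ where $\ker_A f = \{x\in A: f(x)=0\}$, etc.

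Next I would prove the forward direction. Suppose $\varphi\colon\ker f\to\ker g$ is an isomorphism. At node $0$ we get, as in the proof of Theorem~\ref{Theorem no iso}, that $\varphi(0,8,0) = (8e,0,0)$ with $e=\pm1$. The key move is to ask what $\varphi$ does with the element $(0,1,8)$, which lies in $\ker_A f$ precisely when $P$ holds (since $(0,1,8)\in A_1$). So \emph{if $P$ holds}, the argument of Theorem~\ref{Theorem no iso} applies verbatim: we would need $8e' + 24e \equiv 0 \pmod{64}$, which is false, so $\varphi$ cannot exist. Hence $P$ being true forces the nonexistence of $\varphi$; contrapositively, existence of $\varphi$ gives $\lnot P$. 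But we also want to reach the formula $P\vee P\Rightarrow(Q\vee\lnot Q)$. Here one uses $Q$: if $P\wedge Q$ holds then $A=\mathbf{Z}^3$ (the whole ambient group), and then $\ker f$ and $\ker g$ are each the constant diagram $\mathbf{Z}^2$, hence isomorphic. So the existence of $\varphi$ together with $P$ should let us decide $Q$: granting $P$, either $\varphi$ restricted to node $1$ can "see" $(0,1,8)$ in its domain — forcing the mod-$64$ contradiction unless $A_1\subseteq A$ fails, i.e.\ unless $\lnot Q$ is impossible... I would organize this as: assuming the isomorphism and assuming $P$, test whether $(0,1,8)\in\ker_A f$; the obstruction computation shows this element cannot be in the domain of any such isomorphism, so $(0,1,8)\notin A_1\cap A$ unless $P\wedge Q$; but $(0,1,8)\in A_1$, so under $P$ we get $\lnot(P\wedge Q)$ is refuted... the clean statement is that, under $P$, the isomorphism exists iff $Q$, giving $P\Rightarrow(Q\vee\lnot Q)$, and since the isomorphism existing already gave us information about $P$, one assembles $P\vee P\Rightarrow(Q\vee\lnot Q)$ — note $P\vee P$ is just $P$ intuitionistically, so the formula is really "$P$ implies $Q$ is decidable," which is exactly what the case analysis delivers.

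For the converse direction I would argue: assume $P\Rightarrow(Q\vee\lnot Q)$ and construct the isomorphism. This splits into the substantive cases. If $\lnot P$, then $A=A_0$, so $\ker f$ and $\ker g$ are the diagrams $(\ker_0 f\hookrightarrow\ker_0 f)$ and $(\ker_0 g\hookrightarrow\ker_0 g)$, each a constant rank-one group, visibly isomorphic via $(0,8,0)\mapsto(8,0,0)$. If $P$ and $Q$, then $A=\mathbf{Z}^3$ at nodes $1,2$ and $A_0$ at... no — if $P\wedge Q$ then every $x\in\mathbf{Z}^3$ satisfies the defining condition, so $A=\mathbf{Z}^3$ throughout, and $\ker f\cong\ker g$ as constant diagrams $\mathbf{Z}^2$. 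If $P$ and $\lnot Q$, then $A = \{x : x\in A_0 \text{ or } x\in A_1\} = A_1$ (since $A_0\subseteq A_1$), and we are exactly in the situation of Theorem~\ref{Theorem no iso} — but wait, there the conclusion was \emph{no} isomorphism, so this case had better not arise from our hypothesis alone. This is the point I expect to be the main obstacle: getting the logical form exactly right so that the "$P\wedge\lnot Q$" branch, where an isomorphism provably fails to exist, is consistent with the biconditional. The resolution is that the right-hand side $P\vee P\Rightarrow(Q\vee\lnot Q)$ does \emph{not} assert $Q\vee\lnot Q$ outright; so in a world where $P$ holds but $Q$ is undecidable, the right side is false, and correspondingly no isomorphism exists — consistent. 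Conversely if the right side holds and $P$ holds, then $Q\vee\lnot Q$ holds: if $Q$ we build the iso ($A=\mathbf{Z}^3$ case), if $\lnot Q$ then $A=A_1$ and Theorem~\ref{Theorem no iso} says no iso exists, so the biconditional forces the right side to be false — contradiction with our assumption unless... I would need to recheck: the cleanest reading is that both sides are equivalent to "$\lnot P$ or $(P\wedge Q)$," and I would verify that $[\ker f\cong\ker g] \leftrightarrow [\lnot P \vee (P\wedge Q)] \leftrightarrow [P\Rightarrow(Q\vee\lnot Q)]$ — no, the last equivalence fails classically-checkably, so the actual theorem statement $P\vee P\Rightarrow(Q\vee\lnot Q)$ must be what falls out, and I would trace through each disjunct carefully, using that the diagram is built from \emph{inclusions} so that an isomorphism at node $i$ must commute with the inclusions into node $i+1$, which is precisely the rigidity the mod-$64$ computation exploits.
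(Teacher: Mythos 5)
There is a genuine gap, and it starts with a misreading of the statement. The formula $P\vee P\Rightarrow\left(Q\vee\lnot Q\right)$ is parsed in the paper as the disjunction $P\vee\bigl(P\Rightarrow\left(Q\vee\lnot Q\right)\bigr)$, not as $\left(P\vee P\right)\Rightarrow\left(Q\vee\lnot Q\right)$; the paper's subsequent appeal to the disjunction property (``either $P$ is a theorem \ldots\ or $P\Rightarrow\left(Q\vee\lnot Q\right)$ is a theorem'') makes this unambiguous. Your collapse of $P\vee P$ to $P$ turns the theorem into ``$\ker f\cong\ker g$ iff $Q$ is decidable given $P$,'' which is a different and in fact false statement: the paper shows that $P$ \emph{alone} already yields an isomorphism.

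That points to the central mathematical error in your argument: you import Theorem \ref{Theorem no iso} into a setting where it does not apply. Theorem \ref{Theorem no iso} rules out an isomorphism \emph{of diagrams}, i.e.\ a map forced to respect all three levels $\ker_{0}\subseteq\ker_{1}\subseteq\ker_{2}$ simultaneously; in the Brouwerian example $\ker f$ and $\ker g$ are single subgroups of the single group $A$, and only a group isomorphism is demanded. When $P$ holds, the map induced by $\varphi\left(0,1,0\right)=\left(1,0,-32\right)$ and $\varphi\left(0,0,1\right)=\left(0,0,1\right)$ carries $A_{1}\cap\ker f$ onto $A_{1}\cap\ker g$ and $\ker_{2}f$ onto $\ker_{2}g$, hence restricts to an isomorphism $\ker f\to\ker g$ whatever the status of $Q$; it simply does not fix $\ker_{0}$ (it sends $\left(0,8,0\right)$ to $\left(8,0,-256\right)$), and nothing obliges it to. So your forward-direction claim that ``$P$ being true forces the nonexistence of $\varphi$'' is wrong, and your worry that the $P\wedge\lnot Q$ branch of the converse is contradictory is unfounded — there $A=A_{1}$ and both kernels are free of rank two, with an explicit isomorphism $\varphi\left(0,1,8\right)=\left(3,0,-8\right)$, $\varphi\left(0,8,0\right)=\left(8,0,0\right)$. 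The mod-$64$ obstruction is used only in the converse, and only as follows: given an isomorphism, either $\varphi\left(0,8,0\right)\neq\left(\pm8,0,0\right)$, which forces $P$ (first disjunct), or one may take $\varphi\left(0,8,0\right)=\left(8,0,0\right)$ and then, assuming $P$, decide whether $\varphi\left(\ker_{1}f\right)=\ker_{1}g$: if not, some element of $A$ lies outside $A_{1}$ and $Q$ holds; if so, $Q$ must fail, since $Q$ would make $\varphi$ a diagram isomorphism contradicting Theorem \ref{Theorem no iso}. This yields $P\Rightarrow\left(Q\vee\lnot Q\right)$ as the second disjunct. Your proposal never reaches this structure and ends without establishing either implication.
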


\begin{proof}
As before, we denote $A_{i}\cap\ker f$ by $\ker_{i}f$.

If $P$ holds, then the isomorphism is induced by $\varphi\left(  0,1,0\right)
=\left(  1,0,-32\right)  $ and $\varphi\left(  0,0,1\right)  =\left(
0,0,1\right)  $. Suppose $P\Rightarrow\left(  Q\vee\lnot Q\right)  $ holds.
Define $\varphi$ on $\ker_{0}f$ by $\varphi\left(  0,8,0\right)  =\left(
0,0,8\right)  $. That's all we have to do unless we are given $x$ that is not
in $\ker_{0}f$. If $x\in\ker_{2}f$, and $x\notin\ker_{0}f$, then $P$ holds,
hence either $Q$ or $\lnot Q$ holds. If $Q$ holds, then the isomorphism is
induced by $\varphi\left(  0,1,0\right)  =\left(  1,0,0\right)  $ and
$\varphi\left(  0,0,1\right)  =\left(  0,0,1\right)  $. If $\lnot Q$ holds,
the isomorphism is induced by $\varphi\left(  0,1,8\right)  =\left(
3,0,-8\right)  $ and $\varphi\left(  0,8,0\right)  =\left(  8,0,0\right)  $.

Conversely, suppose $\varphi$ is an isomorphism. If $\varphi\left(
0,8,0\right)  \neq\left(  \pm8,0,0\right)  $, then $P$ holds, so we may assume
that $\varphi\left(  0,8,0\right)  =\left(  8,0,0\right)  $. To show that
$P\Rightarrow Q\vee\lnot Q$, suppose $P$ holds. If $\varphi\left(  \ker
_{1}f\right)  \neq\ker_{1}g$, then $Q$ holds. If $\varphi\left(  \ker
_{1}f\right)  =\ker_{1}g$, then $Q$ cannot hold because that would give an
isomorphism in the diagram category contrary to Theorem \ref{Theorem no iso}.
\medskip
\end{proof}

So if we could find a constructive proof that $\ker f$ and $\ker g$ were
isomorphic, then we would have a constructive proof of the propositional form
\[
P\vee P\Rightarrow\left(  Q\vee\lnot Q\right)  \text{.}
\]
That means that this form would be a theorem in the intuitionistic
propositional calculus. But then by the disjunction property, either $P$ is a
theorem, which it is not, or $P\Rightarrow\left(  Q\vee\lnot Q\right)  $ is a
theorem. In the latter case, substituting $\top$ for $P$ gives $Q\vee\lnot Q$,
the law of excluded middle, which is not a theorem.

The diagram example of the preceding section can itself be thought of as an
object in a model of intuitionistic abelian group theory, and in this way
directly shows that Walker's theorem does not admit a constructive proof, even
for subgroups of $\mathbf{Z}^{2}$.

\section{Canceling $\mathbf{Z}$ with respect to subgroups of $\mathbf{Q}$}

We have seen that we can't cancel $\mathbf{Z}$ with respect to certain
subgroups of $\mathbf{Z}\oplus\mathbf{Z}$. It is natural to ask what the
situation is with respect to subgroups of $\mathbf{Z}$. We give a constructive
proof of the following theorem.

\begin{theorem}
Let $B$ be an abelian group such that every nontrivial homomorphism from $B$
to $\mathbf{Z}$ is one-to-one. If $f$ is a homomorphism from $B\oplus
\mathbf{Z}$ onto $\mathbf{Z}$, then $\ker f$ is isomorphic to $mB$ for some
positive integer $m$. Hence if $B$ is torsion free, then $\ker f$ is
isomorphic to $B$.
\end{theorem}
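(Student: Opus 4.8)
The plan is to write $f$ in coordinates and then recover $\ker f$ as an isomorphic copy of a subgroup of $B$. Set $\beta\colon B\to\mathbf{Z}$ to be $b\mapsto f(b,0)$ and put $n=f(0,1)$, so that $f(b,k)=\beta(b)+nk$. Since $\mathbf{Z}$ is discrete we may decide whether $n=0$ or $n\neq 0$, and I would run the argument off this decidable dichotomy rather than off the (non-decidable) question of whether $\beta$ is zero. If $n=0$, then $f$ being onto forces $\beta$ to be onto, hence nontrivial, hence one-to-one by hypothesis, hence an isomorphism $B\cong\mathbf{Z}$; then $\ker f=\{(0,k):k\in\mathbf{Z}\}\cong\mathbf{Z}\cong B$, so $m=1$ works.

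For the main case $n\neq 0$, the plan is to use the projection $\pi\colon\ker f\to B$, $(b,k)\mapsto b$. It is a monomorphism, since $(b,k),(b,k')\in\ker f$ forces $n(k-k')=0$, and its image is the subgroup $B_{n}=\{b\in B:n\mid\beta(b)\}$, because $(b,k)\in\ker f$ says exactly $nk=-\beta(b)$. Thus $\ker f\cong B_{n}$, and everything reduces to identifying $B_{n}$ with $nB$. The inclusion $nB\subseteq B_{n}$ is immediate, and for the other direction I would extract from surjectivity of $f$ a pair $b_{1},k_{1}$ with $\beta(b_{1})+nk_{1}=1$. If $|n|=1$ there is nothing to prove, as $B_{n}=B=nB$. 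If $|n|\geq 2$, then $\beta(b_{1})=1-nk_{1}$ is nonzero (otherwise $n\mid 1$), so $\beta$ is nontrivial and hence one-to-one; then for $b$ with $\beta(b)=nc$ the identity $c=c\bigl(\beta(b_{1})+nk_{1}\bigr)=\beta(cb_{1}+k_{1}b)$ exhibits $c$ as $\beta(b_{2})$ for some $b_{2}$, whence $\beta(b)=n\beta(b_{2})=\beta(nb_{2})$ and injectivity gives $b=nb_{2}\in nB$. Since $nB$ and $|n|B$ coincide as subgroups, this yields $\ker f\cong mB$ with $m=|n|\geq 1$. The torsion-free case then follows at once: multiplication by $m$ is a monomorphism $B\to B$ with image $mB$, so $B\cong mB\cong\ker f$.

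The point to be careful about is constructivity. The tempting but illegitimate move is to split on whether $\beta$ is zero or injective; the fix is that the decidable split $n=0$ versus $n\neq 0$ already suffices, with the hypothesis on $B$ only invoked when $|n|\geq 2$, where surjectivity of $f$ makes $\beta$ provably nontrivial. The hard part — though it is short — will be the number-theoretic computation showing $\{b\in B:n\mid\beta(b)\}=nB$; this is exactly where one-to-one-ness of $\beta$ is used, and everything else is bookkeeping about the projection $\pi$.
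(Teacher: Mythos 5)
Your proof is correct and follows essentially the same route as the paper's: both identify $\ker f$ with $f_1^{-1}(n\mathbf{Z})=\{b\in B: n\mid \beta(b)\}$ via the projection to $B$, and then show this subgroup equals $nB$ using the comaximality $\beta(B)+n\mathbf{Z}=\mathbf{Z}$ supplied by surjectivity --- your B\'ezout witness $\beta(b_1)+nk_1=1$ is just the element-level form of the paper's ideal identity $f_1(B)\cap s\mathbf{Z}=sf_1(B)$. Your explicit split into $|n|=1$ versus $|n|\geq 2$, which guarantees that $\beta$ is provably nontrivial before injectivity is invoked, is a small point of extra care that the paper leaves implicit.
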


\begin{proof}
Let $s=f\left(  0,1\right)  $ and $f_{1}$ the restriction of $f$ to $B$. As
$f$ is onto, we have $f_{1}\left(  B\right)  +s\mathbf{Z}=\mathbf{Z}$. If
$s=0$, then $f$ maps $B$ isomorphically onto $\mathbf{Z}$, and $0\oplus
\mathbf{Z}$ is $\ker f$, in which case we can set $m=1$. So we may suppose
that $s>0$. We will show that $\ker f$ is isomorphic to $sB$.

When is $\left(  b,n\right)  $ in $\ker f$? As $f\left(  b,n\right)
=f_{1}\left(  b\right)  +sn$, we see that a necessary and sufficient condition
is that $f_{1}\left(  b\right)  \in s\mathbf{Z}$ and $n=-f_{1}\left(
b\right)  /s$. Thus $\ker f$ is isomorphic to $f_{1}^{-1}\left(
s\mathbf{Z}\right)  $. As $f_{1}\left(  B\right)  +s\mathbf{Z}=\mathbf{Z}$,
and $f_{1}\left(  B\right)  $ and $s\mathbf{Z}$ are ideals of $\mathbf{Z}$, it
follows that $f_{1}\left(  B\right)  \cap s\mathbf{Z}=f_{1}\left(  B\right)
s\mathbf{Z}=sf_{1}\left(  B\right)  $. Thus
\[
f_{1}^{-1}\left(  s\mathbf{Z}\right)  =f_{1}^{-1}\left(  f_{1}\left(
B\right)  \cap s\mathbf{Z}\right)  =f_{1}^{-1}\left(  sf_{1}\left(  B\right)
\right)
\]
Clearly $f_{1}^{-1}\left(  sf_{1}\left(  B\right)  \right)  \supseteq sB$.
Conversely, if $f_{1}\left(  b\right)  \in sf_{1}\left(  B\right)
=f_{1}\left(  sB\right)  $, then $f_{1}\left(  b\right)  =f_{1}\left(
sb^{\prime}\right)  $ so $b=sb^{\prime}\in sB$.\medskip
\end{proof}

Note that any torsion-free group $B$ of rank at most one satisfies the
hypothesis of the theorem. Also any group with no nontrival maps into
$\mathbf{Z}$. Classically, this latter condition simply says that $B$ has no
proper $\mathbf{Z}$ summands.

What other groups $B$ allow cancellation of $\mathbf{Z}$? It suffices that $B
$ be finitely generated. To see this, look at Theorem \ref{proof}. If $\ker f
$ is finitely generated, then $g\left(  \ker f\right)  $ is a finitely
generated subgroup of $\mathbf{Z}$, hence is projective. From this argument it
suffices that any image of $B$ in $\mathbf{Z}$ be finitely generated. Notice
that subgroups of $\mathbf{Z}$ need not have this property.

What about a direct sum of two groups that allow cancellation of $\mathbf{Z}$,
such as a direct sum of two subgroups of $\mathbf{Z}$?


\begin{thebibliography}{9}                                                                                                %
\bibitem {A}\textsc{Arnold, David M}., \textit{Finite rank torsion free
abelian groups and rings}, Lecture Notes in Mathematics \textbf{931},
Springer, 1982.

\bibitem {Az}\textsc{Azumaya, Goro}, Corrections and supplementaries to my
paper concerning Krull-Remak-Schmidt's theorem, \textit{Nagoya Math. J}.,
\textbf{1} (1950), 117--124.

\bibitem {C}\textsc{Cohn, Paul M}., The complement of a finitely generated
direct summand of an abelian group, \textit{Proc. Amer. Math. Soc}.,
\textbf{7} (1956), 520--521.

\bibitem {FL}\textsc{Fuchs, Laszlo and Frans Loonstra}, On the cancellation of
modules in direct sums over Dedekind domains, \textit{Indag. Math}.
\textbf{33} (1971), 163--169.

\bibitem {H}\textsc{Hs\"{u}, Chin-shui}, Theorems on direct sums of modules,
\textit{Proc. Amer. Math. Soc}., \textbf{13} (1962), 540--542

\bibitem {L}\textsc{Lam, Tsit Yuen}, \textit{A crash course on stable range,
cancellation, substitution, and exchange}, math.berkeley.edu/\symbol{126}lam/ohio.ps

\bibitem {W}\textsc{Walker, Elbert A}., Cancellation in direct sums of groups,
\textit{Proc. Amer. Math. Soc}., \textbf{7} (1956) 898--902.
\end{thebibliography}
\end{document}